\newcommand{\comment}[1]{}
\theoremstyle{plain}
\newtheorem*{Theorem*}{Theorem 1}
\newtheorem*{Lemma*}{Lemma}
\newtheorem{Definition}{Definition}[section]
\newtheorem{Theorem}[Definition]{Theorem}
\title{
A note on $\overline{2}$-separable codes and $B_2$ codes
}
\author{Stefano Della Fiore and Marco Dalai,\\ \small Department of Information Engineering, University of Brescia}
\date{}
\begin{document}
\maketitle
\begin{abstract}
\normalsize
We derive a simple proof, based on information theoretic inequalities, of an upper bound on the largest rates of $q$-ary $\overline{2}$-separable codes that improves recent results of Wang for any $q\geq 13$. For the case $q=2$, we recover a result of Lindstr\"om, but with a much simpler derivation. The method easily extends to give bounds on $B_2$ codes which, although not improving on Wang's results, use much simpler tools and might be useful for future applications.
\end{abstract}




\section{Introduction}\label{sec1}

Let for notational convenience $[0,q-1]=\{0,1,\ldots q-1\}$. We call a $q$-ary code of length $n$ any subset of $C_n\subseteq [0,q-1]^n$. Let $M=|C_n|$ and write $C_n= \{c_1, c_2, \ldots, c_M\}$. Each $c_i$ is called a codeword and we use the notation $c_i(j)$, for $j=1,2,\ldots, n$ for its components. The base-$q$ rate of such a code is defined as $\log_q M / n$. Note that, throughout, all logarithms without subscript are to base 2.

We need two definitions.
\begin{Definition}
A $q$-ary code $C_n$ with codewords of length $n$ is a $t$-frame\-proof code if for any $t$ codewords and every other codeword, there exists a coordinate $i$ with $1 \leq i \leq n$, in which the symbols of the $t$ codewords do not contain the symbol of the other codeword.
\end{Definition}

\begin{Definition}
A $q$-ary code $C_n$ with codewords of length $n$ is a $\overline{t}$-separable code if for any distinct $k$ codewords and $m$ codewords with $1 \leq k,m \leq t$, there exists a coordinate $i$, $1 \leq i \leq n$, in which the union of the elements of the $k$ codewords differs from the union of the elements of the $m$ codewords.
\end{Definition}

Blackburn \cite{Blackburn1}, \cite{Blackburn2} showed that the rate of $t$-frameproof codes is upper bounded by $1/t$. Cheng and Miao \cite{Miao2} observed that any $t$-frameproof code is a $\overline{t}$-separable code and also that any $\overline{t}$-separable code is a $(t-1)$-frameproof code for every $t \geq 2$. This implies that the rate of $\overline{t}$-separable codes is upper bounded by $1/(t-1)$. 
Interesting results on $\overline{t}$-separable code, with the slightly weaker constraint that only $k=m=t$ is considered in the definition, were recently derived in \cite{Dyachkov} using an information theoretic approach. With this slightly different definition, an upper bound on the rate of the form  $2/t$ was obtained.  Since both bounds give the trivial $1/(t-1)=2/t=1$ for $t=2$, other approaches must be adopted for bounding the rate of $\overline{2}$-separable codes, which leads to a rather interesting problem.

A non-trivial bound for $q$-ary $\overline{2}$-separable codes was derived by Gu, Fan and Miao in \cite{Gu}  and an improvement was recently obtained by Wang \cite{Wang} for every ${3 \leq q \leq 17}$ extending a procedure introduced for the binary case by Cohen et al. \cite{Cohen} based on linear programming bounds on codes.

\section{New Upper Bounds for $\bar{2}$-separable codes}\label{sec2}

We have the following new bound on the rate of $\overline{2}$-separable codes

\begin{Theorem}\label{theorem1} For integer $q \geq 2$, let $C_n$ be a family of $q$-ary $\overline{2}$-separable codes. Then
$$R_s := \limsup_{n \to \infty} \frac{1}{n} \log_q |C_n| \leq \frac{2q-1}{3q-1}\,.
$$
\end{Theorem}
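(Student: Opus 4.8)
The plan is to use the entropy method: turn the separability condition into a lower bound on the entropy of the coordinatewise union of two random codewords, and then single-letterize.

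First I would reformulate the hypothesis. For $t=2$ the definition is equivalent to requiring that the map sending an unordered pair $\{c,c'\}$ of distinct codewords to the sequence of sets $\big(\{c(i),c'(i)\}\big)_{i=1}^{n}$ be injective on the $\binom{|C_n|}{2}$ pairs of $C_n$; the remaining cases of the definition (a pair versus a singleton, a pair versus one of its own members, a singleton versus a singleton) are separated automatically since the codewords are distinct. Now fix $n$, write $M=|C_n|$, draw two i.i.d.\ uniform codewords $X,Y\in C_n$, and let $U=(U_1,\dots,U_n)$ with $U_i=\{X_i,Y_i\}$. By the reformulation, $U$ is in bijection with the unordered pair $\{X,Y\}$, which determines the ordered pair up to one bit, so $H(U)=2\log M-1+o(1)$. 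I would then combine this with the single-letter facts $\log M=H(X)\le\sum_i H(X_i)$ and $H(U)\le\sum_i H(U_i)$, together with the exact identity $H(U_i)=2H(X_i)-(1-\gamma_i)$, where $\gamma_i=\sum_{a\in[0,q-1]}\Pr[X_i=a]^2$ is the collision probability in coordinate $i$. If more leverage is needed, one introduces a third i.i.d.\ codeword $Z$: the union words of $\{X,Y\}$ and $\{X,Z\}$ jointly recover $X$ (intersect the two pairs they determine) and hence $(X,Y,Z)$ off an event of probability $O(1/M)$, so $H(U,V)\ge 3\log M-O(1)$ for the corresponding word $V$, which single-letterizes through the ternary collision structure of $(X_i,Y_i,Z_i)$.

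The goal of the final step is to package the chosen relations into one inequality $\log M\le\sum_i\psi(p_i)+O(1)$, where $p_i$ is the law of $X_i$ and $\psi$ is a suitable convex combination of the per-coordinate bounds, so that $R_s\le(\max_p\psi(p))/\log q$ with $\max_p\psi(p)=\tfrac{2q-1}{3q-1}\log q$; for $q=2$ this is the binary $B_2$ / Sidon-in-the-cube case, where $\tfrac{2q-1}{3q-1}=\tfrac35$ recovers Lindström's bound. I expect this optimization to be the main obstacle: the crudest per-coordinate estimate $H(U_i)\le\log\binom{q+1}{2}$ gives only the trivial rate, and even the two-codeword bound with the collision term $\gamma_i$ falls short of $\tfrac{2q-1}{3q-1}$, so the real work is to find which combination of inequalities — how many codewords to correlate, which auxiliary variables to add, how the $\gamma_i$ are weighted — makes the worst-case coordinate profile collapse to a single explicit distribution $p^\ast$ on $[0,q-1]$ at which all inequalities used are simultaneously tight; it is this $p^\ast$ that should pin the constant to $\tfrac{2q-1}{3q-1}$. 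The degenerate contributions ($X=Y$, and $Y=Z$ in the three-codeword variant) are routine, being $O(1)$ and thus absorbed in the error.
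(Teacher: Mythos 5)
Your framework (entropy of the coordinatewise merged word, single-letterization, a collision term $\gamma_i$) points in the right direction, and your self-diagnosis is accurate: the reformulation of $\overline{2}$-separability as injectivity on unordered pairs is correct, the identity $H(U_i)=2H(X_i)-(1-\gamma_i)$ is correct, and the resulting global inequality $2\log M-1\le\sum_i\bigl(2H(X_i)-(1-\gamma_i)\bigr)$ really does fall short (it gives roughly $R_s\le 1-\tfrac{q-1}{2q\log q}$, which tends to $1$ as $q\to\infty$, while the target tends to $2/3$). But the proposal stops exactly where the proof has to happen, and the missing ingredient is not a cleverer weighting of the $\gamma_i$ nor a third codeword. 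The paper's key device is a prefix/suffix split: write each codeword as $(p_i,w_i)$ with prefix length $e$, group the codewords into $r=q^e$ classes $P_1,\ldots,P_r$ by prefix, and use Cauchy--Schwarz to get $\sum_i|P_i|^2\ge M^2/q^e$. The entropy argument is then applied only to ordered pairs of suffixes drawn from a \emph{common} class: separability makes the merge map injective on distinct such pairs across all classes, each merged coordinate takes values in an alphabet of size $q^2-q+1$ with the collision symbol forced to have frequency at least $1/q$ (your $\gamma_i\ge 1/q$), so each suffix coordinate contributes at most $\tfrac{2q-1}{q}\log q$, and the diagonal contributes $O(1)$ once $e\approx\log_q M$. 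This yields $2\log_q M\le e+f\cdot\tfrac{2q-1}{q}+o(n)$ with $e\approx\log_q M$ and $f=n-e$, i.e.\ the self-referential inequality $R\le(1-R)\tfrac{2q-1}{q}$, whose solution is exactly $\tfrac{2q-1}{3q-1}$. It is this ``$R$ on both sides'' structure, bought by spending about $\log_q M$ coordinates on the prefix, that produces the denominator $3q-1$; no purely global single-letter bound of the form $cR\le c'$ can reproduce it, which is why your optimization over per-coordinate profiles cannot close.

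Your fallback with a third codeword does not repair this. The joint word $(U,V)$ indeed recovers $(X,Y,Z)$ up to $O(1)$ bits, but per coordinate, at the uniform marginal, $H(U_i,V_i)=3\log q-O(1/q)$ (the intersection trick determines $X_i,Y_i,Z_i$ except on events of probability $O(1/q)$), so one again gets a bound of the form $R_s\le 1-O\bigl(\tfrac{1}{q\log q}\bigr)$, which is vacuous compared with $\tfrac{2q-1}{3q-1}$ for large $q$. So the proposal as written has a genuine gap at its central step; the prefix-conditioning/Cauchy--Schwarz mechanism (or an equivalent way of making the bound self-referential in $R$) has to be added before any of the announced constants can be obtained.
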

This bound improves the one given by Wang \cite{Wang} when ${q \geq 13}$. It also improves the bound of Gu et al. \cite{Gu} for every ${q \geq 2}$. When $q=2$, instead, the bound coincides with that of Lindstr\"om \cite{Lind}, but with a rather simpler proof.

\begin{proof}[Proof of Theorem \ref{theorem1}]
Let $D = \{ (x,y) \in [0, q-1]^2 : x \neq y \} \cup \{0\}$, which implies ${|D| = q(q-1) + 1}$. Define the function $\phi:[0,q-1]^2 \to D$ as
$$
	\phi(x,y) =  \begin{cases} 0, & \mbox{if } x=y \\ (x,y), & \mbox{if } x \neq y \end{cases}.
$$
For an integer $f$, we define $\Phi$, the natural extension of $\phi$ when applied to vectors, that is the function that maps any two vectors $w_1, w_2 \in $ ${[0, q-1]^f}$ in a vector in $D^f$ according to the rule
\begin{align*}
	\Phi(w_1,& w_2) =  \left(\phi(w_1(1), w_2(1)), \ldots, \phi(w_1(f), w_2(f))\right) \in D^f.
\end{align*}

Let $C_n = \{c_1, c_2, \ldots, c_M \}$ be a $q$-ary $\overline{2}$-separable code with codewords of length $n$. We divide each codeword $c_i$ in two sub-blocks, a prefix $p_i$ of length $e$ and a suffix $w_i$ of length $f$ where $n = e+f$. We use here the notation $c_i=(p_i, w_i)$.

Enumerate all the vectors in $[0,q-1]^e$ as $l_1, l_2, \ldots, l_r$, where $r=q^e$, and denote with $P_i$ the set of codewords of $C_n$ which have $l_i$ as the first $e$ components, that is of the form $(l_i,w_j)$.
An easy upper bound on $M$ then follows by the Cauchy-Schwarz inequality, namely
\begin{equation}\label{eq2}
\frac{M^2}{r}  \leq \sum_{i=1}^r |P_i|^2.
\end{equation}

We want a good upper bound on the sum of the ordered pairs in each $P_i$ to get a good upper bound on $M$.  From \cite[Theorem 2]{Gu}, we know that the function $\Phi$ is injective when we restrict its domain to pairs of suffixes of different vectors taken from the same $P_i$ for all $i=1,\ldots,r$. We provide a short proof in order to have a self-contained paper. Assume, for the sake of contradiction, there exists four different codewords $(l_h, w_1), (l_h, w_2) \in P_h$, $(l_m, w_3),(l_m, w_4) \in P_m$ such that $\Phi(w_1, w_2) = \Phi(w_3, w_4)$ either when $h=m$ and $(w_1, w_2) \neq (w_3, w_4)$ or when $h \neq m$.  For case $h = m$, if $\{w_1, w_2\} \cap \{w_3, w_4\} \neq \emptyset$ then $\Phi(w_1, w_2) = \Phi(w_3, w_4)$ implies a contradiction on the assumption $(w_1, w_2) \neq (w_3, w_4)$. Otherwise, when $\{w_1, w_2\} \cap \{w_3, w_4\} = \emptyset$, the subcodes $C_1= \{ (l_h, w_1), (l_h, w_4)\}$ and $C_2 = \{(l_h, w_2), (l_h, w_3) \}$ do not satisfy the $\overline{2}$-separability property, a contradiction. Instead for case $h \neq m$, $\Phi(w_1, w_2) = \Phi(w_3, w_4)$ implies that the subcodes  $C_1= \{ (l_h, w_1), (l_m, w_4)\}$ and $C_2 = \{(l_h, w_2), (l_m, w_3) \}$ do not satisfy the $\overline{2}$-separability property, again a contradiction.

We generalize a smart observation given in \cite[eq. (2.9)]{Lind}.
Fix a coordinate $\ell$ and a subcode $P_i$ in our code $C_n$, and let $f_0, f_1, \ldots, f_{q-1}$ to be the fractions of symbols that occur in the $\ell$-th cooordinate of $P_i$. Then, when we take all vectors $\Phi(w_j, w_k)$, where $w_j$ and $w_k$ are the suffixes of codewords (not necessarily distinct) that belong to $P_i$, the frequency of the $0$ symbol in the $\ell$-th coordinate of these vectors is equal to $f_0^2 + \cdots + f_{q-1}^2$.
This summation is always greater than or equal to $1/q$ under the constraint that $\sum_{i=0}^{q-1} f_i = 1$. So, the fraction of $0'$ in the $\ell$-th coordinate of the vectors $\Phi(w_j, w_k)$, where $w_j$ and $w_k$ are suffixes in $P_i$ for each $i=1, \ldots, r$, is not smaller than $1/q$.

Let $X = (X_1, \ldots, X_f)$ and $Y = (Y_1, \ldots, Y_f)$ be two random variables with joint uniform distributions over the set of ordered pairs of suffixes of vectors taken form the same $P_i$ for each $i=1, \ldots, r$.
Then we have that
\begin{equation} \label{eq:pairRandom}
H(X, Y) = \log \left( \sum_{i=1}^r |P_i|^2 \right),
\end{equation}
where $H$ is the Shannon entropy.

We define, for every $i=1, \ldots, f$, the random variable $Z_i$ by setting $Z_i=\phi(X_i, Y_i)$. Since the function $\Phi(x, y)$ is injective when $x \neq y$ and knowing that $Z = (Z_1, \ldots, Z_f) = \Phi(X, Y)$, then
\begin{equation} \label{eq:Injective}
H(X, Y) = H(Z) + \Pr(Z = \underline{0}) \cdot H(X, Y|Z = \underline{0}).
\end{equation}
By the well-known subadditivity property of the entropy function we have
\begin{align} 
H(Z)  \leq \sum_{i=1}^f H(Z_i)
 \leq f\left( \max_{\substack{\alpha_0, \ldots, \alpha_{q^2-q} \\ \alpha_0 + \cdots + \alpha_{q^2-q} = 1 \\ \alpha_0 \geq 1/q}} H(\alpha_0, \ldots, \alpha_{q^2-q})\right )\label{eq:Subadditivity}
\end{align}
where we abuse the notation using $H$ also for the entropy of a distribution and $\alpha_i$ represents the frequency of the $i$-th symbol.
It is easy to see that the maximum in (\ref{eq:Subadditivity}) is achieved when $\alpha_0 = 1/q$ and $\alpha_1 = \cdots = \alpha_{q^2-q} = 1/q^2$ and takes the value $\log q \cdot (2q-1)/q$.

By \eqref{eq2} we get
\begin{equation}\label{eq:probUpperBound}
\Pr(Z = \underline{0}) = \frac{M}{\sum_{i=1}^r |P_i|^2} \leq \frac{r}{M}.
\end{equation}
The conditional distribution $P(X=x, Y=y|Z = \underline{0})$ is uniform over its support which has size equal to $M$. Then
\begin{equation}\label{eq:entropyUpperBound}
H(X, Y|Z = \underline{0}) = \log M.
\end{equation}

Setting $e =\lfloor \log_q (2M) - \log_q \log M \rfloor$ we have by \eqref{eq:probUpperBound} and \eqref{eq:entropyUpperBound} that
\begin{equation}\label{eq:prefix}
	\Pr(Z = \underline{0}) \cdot H(X, Y|Z = \underline{0}) \leq \frac{r}{M} \log M \leq 2.
\end{equation}
Then by (\ref{eq:pairRandom}), (\ref{eq:Injective}), (\ref{eq:Subadditivity}) and \eqref{eq:prefix} we have
\begin{equation} \label{eq:UBonZ}
\sum_{i=1}^r |P_i|^2 \leq q^{f (2q-1)/q + o(n)}
\end{equation}
where $o(n)$ is meant as $n \to \infty$.

Finally we are now ready to prove Theorem \ref{theorem1}. By (\ref{eq2}) and (\ref{eq:UBonZ}) we have that
\begin{equation}\label{eq10}
M^2 \leq q^{f(2q-1)/q + e + o(n)}.
\end{equation}
Since $e$ is fixed and we know that $n=e+f$, from \eqref{eq10} we get
$$
	M \leq q^{n\frac{2q-1}{3q-1} + o(n)}
$$
and Theorem \ref{theorem1} follows.
\end{proof}

In Figure \ref{fig:Comparison} we give a comparison between the bounds on the rate of $\overline{2}$-separable codes given in \cite{Gu}, \cite{Wang} and the one given in Theorem \ref{theorem1}.

\section{Bounds for $B_2$ codes}
The related notion of $B_2$ codes can be introduced as follows.
\begin{Definition}
We say that $C_n = \{c_1, c_2, \ldots, c_M\}$ is a $q$-ary $B_2$ code with $M$ codewords of length $n$ and with symbols in the alphabet $[0, q-1]$ if all sums (over the real field) $c_i+c_j$ for $1\leq i \leq j \leq n$ are different.
\end{Definition}
Note that for $q=2$ this definition is equivalent to the definition of a $\bar{2}$-separable code.
Gu et al. in \cite{Gu} provide non trivial bounds on the rate of $q$-ary $B_2$ codes and they also observed that an implicit upper bound can be found in Lindtr\"om \cite[Theorem 1]{Lind}.
These bounds were improved by Wang \cite{Wang} for every ${2 \leq q \leq 12}$. 

An immediate extension of the method presented in the previous section leads to the following.
\begin{Theorem}\label{theorem2} For integer $q \geq 2$, let $C_n$ be a family of $q$-ary $B_2$ codes. Then
$$R_b :=  \limsup_{n \to \infty} \frac{1}{n} \log_q |C_n| \leq \frac{q+(q-1)\log_q2}{2q+(q-1)\log_q2}.$$
\end{Theorem}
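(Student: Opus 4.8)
The plan is to mimic the proof of Theorem~\ref{theorem1}, replacing the map $\phi$ that detects coincidences of unordered pairs by a map that detects coincidences of \emph{sums}. First I would observe that for $q$-ary symbols $x,y$ the real sum $x+y$ lies in $\{0,1,\ldots,2q-2\}$, so I define $\psi:[0,q-1]^2\to[0,2q-2]$ by $\psi(x,y)=x+y$, and its coordinatewise extension $\Psi$ to pairs of length-$f$ vectors. The $B_2$ property says precisely that $\Psi$ is injective on unordered pairs of distinct codewords, and restricting to a fixed prefix class $P_i$ it is injective on unordered pairs of distinct suffixes there. The wrinkle compared with the separable case is that $\Psi$ is \emph{not} injective on ordered pairs even within one $P_i$, because $\psi(x,y)=\psi(y,x)$; but this costs only a factor $2$, which is absorbed in the $o(n)$ term. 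Concretely, for the random variables $X,Y$ uniform on ordered pairs of suffixes from the same $P_i$, I would write $H(X,Y)=H(Z)+\Pr(\text{collision})\cdot H(X,Y\mid\text{collision})$, where now $Z=\Psi(X,Y)$ and a ``collision'' means $X=Y$ \emph{or} $(X,Y)$ is the swap of another ordered pair with the same $\Psi$-value; on this event the support has size at most $2M$, so the second term contributes at most $\log(2M)$, i.e.\ $o(n)$ after the same choice $e=\lfloor\log_q(2M)-\log_q\log M\rfloor$.

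Next I would bound $H(Z)\le\sum_{\ell=1}^f H(Z_\ell)$ by subadditivity, and for each coordinate $\ell$ I need the analogue of the $f_0^2+\cdots\ge 1/q$ observation. Here, if $f_0,\ldots,f_{q-1}$ are the symbol frequencies in coordinate $\ell$ of $P_i$, then the distribution of $Z_\ell$ over ordered pairs is the additive convolution: $\Pr(Z_\ell=s)=\sum_{a+b=s}f_af_b$. The key structural fact is that this convolution distribution, for $s$ ranging over $\{0,\ldots,2q-2\}$, is a probability vector whose entropy is maximized (over all choices of $f$) when $f$ is uniform, giving $f_a=1/q$ and $\Pr(Z_\ell=s)=(q-|s-(q-1)|)/q^2$, the ``triangular'' distribution on $2q-1$ points. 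So the per-coordinate bound is $H(Z_\ell)\le H(\mathrm{Tri}_q)$, where $H(\mathrm{Tri}_q)=\sum_{s=0}^{2q-2}\frac{q-|s-(q-1)|}{q^2}\log\frac{q^2}{q-|s-(q-1)|}$. A direct computation of this sum gives $H(\mathrm{Tri}_q)=2\log q-\frac{q-1}{q}$; in base $q$ this is $\log_q$ of $q^{2-(q-1)/q}$, and substituting into $M^2\le q^{f\cdot H(\mathrm{Tri}_q)/\log q+e+o(n)}$ with $n=e+f$ yields $R_b\le\frac{2-(q-1)/q}{3-(q-1)/q}\cdot$\dots{} — I would then rearrange algebraically to match the stated form $\frac{q+(q-1)\log_q2}{2q+(q-1)\log_q2}$.

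Here is the one point I want to flag as the real obstacle: the claim ``$H$ of the convolution $f*f$ is maximized by uniform $f$.'' This is the exact analogue of ``$f_0^2+\cdots+f_{q-1}^2\ge 1/q$'' but it is genuinely subtler, since we are maximizing entropy of a non-linear function of $f$ rather than minimizing a collision probability. One clean route is to note that $Z_\ell=X_\ell+Y_\ell$ with $X_\ell,Y_\ell$ i.i.d.\ $\sim f$, and that $H(X_\ell+Y_\ell)\le H(X_\ell+Y_\ell\mid Y_\ell)+I$ arguments don't immediately help; instead I would use the fact that among all distributions of $X_\ell+Y_\ell$ arising this way, we certainly have $H(Z_\ell)\le\log(2q-1)$ trivially, but that bound $\log(2q-1)$ is \emph{weaker} than $2\log q-(q-1)/q$ only for small $q$, so I must actually prove the sharp inequality. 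A workable argument: by symmetry and concavity of entropy, the maximizer over the simplex (if unique) is fixed by the symmetry $a\mapsto q-1-a$, and a Lagrange/perturbation computation shows any interior critical point must have all $f_a$ equal; combined with checking the boundary (where the support of $Z_\ell$ shrinks and entropy only drops) this pins down uniform $f$. I would present this as a short lemma. Once that lemma is in hand, the remainder is the same bookkeeping as in Theorem~\ref{theorem1}, with the constant $\log q\cdot(2q-1)/q$ replaced by $2\log q-(q-1)/q$, and the final rate bound follows by the identical prefix-splitting optimization.
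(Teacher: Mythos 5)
There is a genuine gap, and it sits exactly where you flagged it: the key lemma you defer to ``a short lemma'' is false. Taking $\psi(x,y)=x+y$, the entropy of the self-convolution $f*f$ is \emph{not} maximized by the uniform $f$ once $q\geq 3$. For $q=3$, perturb $f=(1/3+\epsilon_0,1/3+\epsilon_1,1/3+\epsilon_2)$ with $\epsilon_0+\epsilon_1+\epsilon_2=0$; the convolution satisfies $\delta g_0=\tfrac23\epsilon_0$, $\delta g_1=\tfrac23(\epsilon_0+\epsilon_1)$, $\delta g_2=0$, $\delta g_3=\tfrac23(\epsilon_1+\epsilon_2)$, $\delta g_4=\tfrac23\epsilon_2$, and the first variation of the entropy is $\delta H=-\tfrac23\epsilon_1\log 2\neq 0$: the uniform distribution is not even a critical point, and decreasing the middle mass strictly increases $H(f*f)$ to first order. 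So your symmetry/Lagrange sketch cannot be completed. Two further problems compound this. First, your closed form $H(\mathrm{Tri}_q)=2\log q-\frac{q-1}{q}$ is wrong for $q\geq 3$ (the correct value is $\frac{2q-1}{q}\log q-\frac{2}{q^2}\sum_{k=1}^{q-1}k\log k$; they coincide only at $q=2$), and in any case $\frac{2-(q-1)/q}{3-(q-1)/q}=\frac{q+1}{2q+1}$ does not rearrange to the stated $\frac{q+(q-1)\log_q 2}{2q+(q-1)\log_q 2}$. Second, under the joint distribution used in the proof of Theorem \ref{theorem1}, $X_\ell$ and $Y_\ell$ are a mixture over the classes $P_i$ of i.i.d.\ pairs, not themselves i.i.d., and ``being a self-convolution'' is not preserved under mixing, so even a correct convolution-entropy lemma could not be applied directly to $Z_\ell$.

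The paper avoids all of this by one small but decisive change: it takes the \emph{difference}, $\Phi(w_1,w_2)=w_1-w_2\in\{-(q-1),\ldots,q-1\}^f$, which is still injective on pairs from a common $P_i$ by the $B_2$ property (if $c_i-c_j=c_k-c_l$ then $c_i+c_l=c_k+c_j$). The point of the difference is that the entire diagonal $x=y$ collapses onto the single symbol $0$, so in each coordinate $\Pr(Z_\ell=0)=\sum_a f_a^2\geq 1/q$ --- a \emph{linear} constraint on the law of $Z_\ell$ that survives mixing over the $P_i$. One then maximizes entropy over all distributions on $2q-1$ symbols subject only to $\alpha_0\geq 1/q$, exactly as in \eqref{eq:Subadditivity}, obtaining $\frac1q\log q+\frac{q-1}{q}\log(2q)=\log q+\frac{q-1}{q}\log 2$ per coordinate; the rest of the bookkeeping is identical to Theorem \ref{theorem1} and yields the stated rate. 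With the sum, the diagonal spreads over the $q$ even values and no such clean constraint exists, which is precisely what forces you into the (false) convolution-maximization claim. If you replace $x+y$ by $x-y$ and reuse the $\alpha_0\geq 1/q$ argument verbatim, your proof goes through.
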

For every ${3 \leq q \leq 12}$ it improves the one given in \cite{Gu} but not the bound given in \cite{Wang}. Of course, when ${q=2}$, Theorems \ref{theorem1} and \ref{theorem2} give the same bound consistently with the fact that any binary $\overline{2}$-separable code is also a $B_2$ code and vice versa.

\begin{proof}[Proof of Theorem \ref{theorem2}]
In this case, we consider the set $D = \{-q+1, \ldots, -1,$ $0, 1, \ldots, q-1\}$, so that now $|D| = 2q - 1$.
For an integer $f$, we define the function $\Phi$ as $\Phi(w_1, w_2) = w_1 - w_2 \in D^f$
where $w_1, w_2 \in [0, q-1]^f$ and the difference is computed in $\mathbb{Z}$.

Let $C_n$ be a $q$-ary $B_2$ code and suppose we have constructed the $r=q^e$ subcodes $P_i$ (as done in Theorem \ref{theorem1}). It can be proved, in a similar manner as Theorem \ref{theorem1}, that $\Phi$ is injective when we restrict its domain to pairs of suffixes of different vectors taken from the same $P_i$ for all $i=1, \ldots, r$. Then, the procedure used in Theorem \ref{theorem1} can be applied to prove Theorem \ref{theorem2}. All the equations from \eqref{eq2} to \eqref{eq10} are verified, also in this case, with the only difference that the cardinality of the set $D$ is $2q - 1$ and not $q^2-q+1$. 
\end{proof}

In Figure \ref{fig:Comparison2} we give a comparison between the bounds on the rate of $B_2$ codes given in \cite{Gu}, \cite[Theorem 1]{Lind}, \cite{Wang} and the one given in Theorem \ref{theorem2}.
\begin{figure}[h]
  \centering
  \begin{minipage}[b]{0.495\textwidth}
    \includegraphics[width=\textwidth]{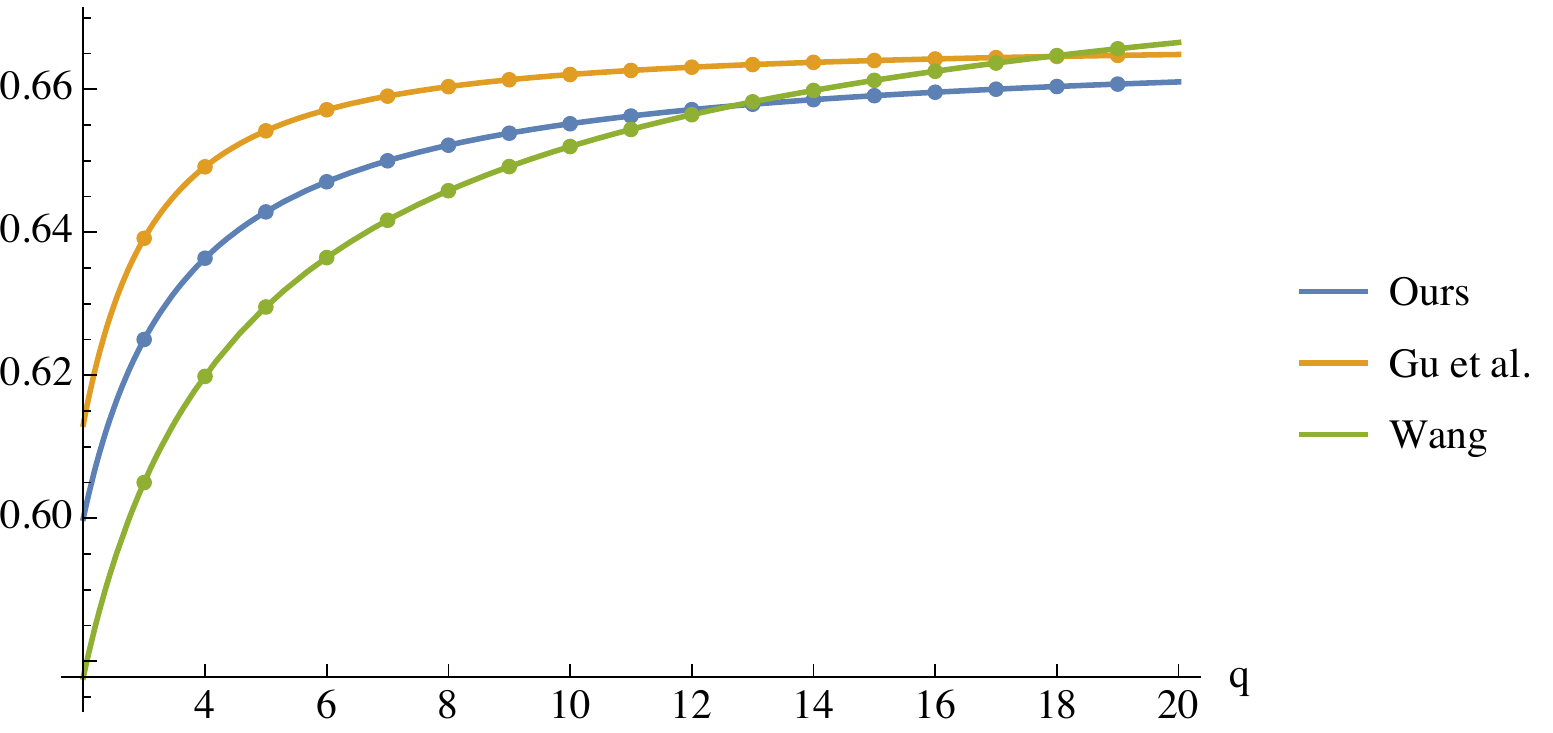}
    \caption{Upper bounds on $R_s$.}
    \label{fig:Comparison}
  \end{minipage}
  \begin{minipage}[b]{0.495\textwidth}
    \includegraphics[width=\textwidth]{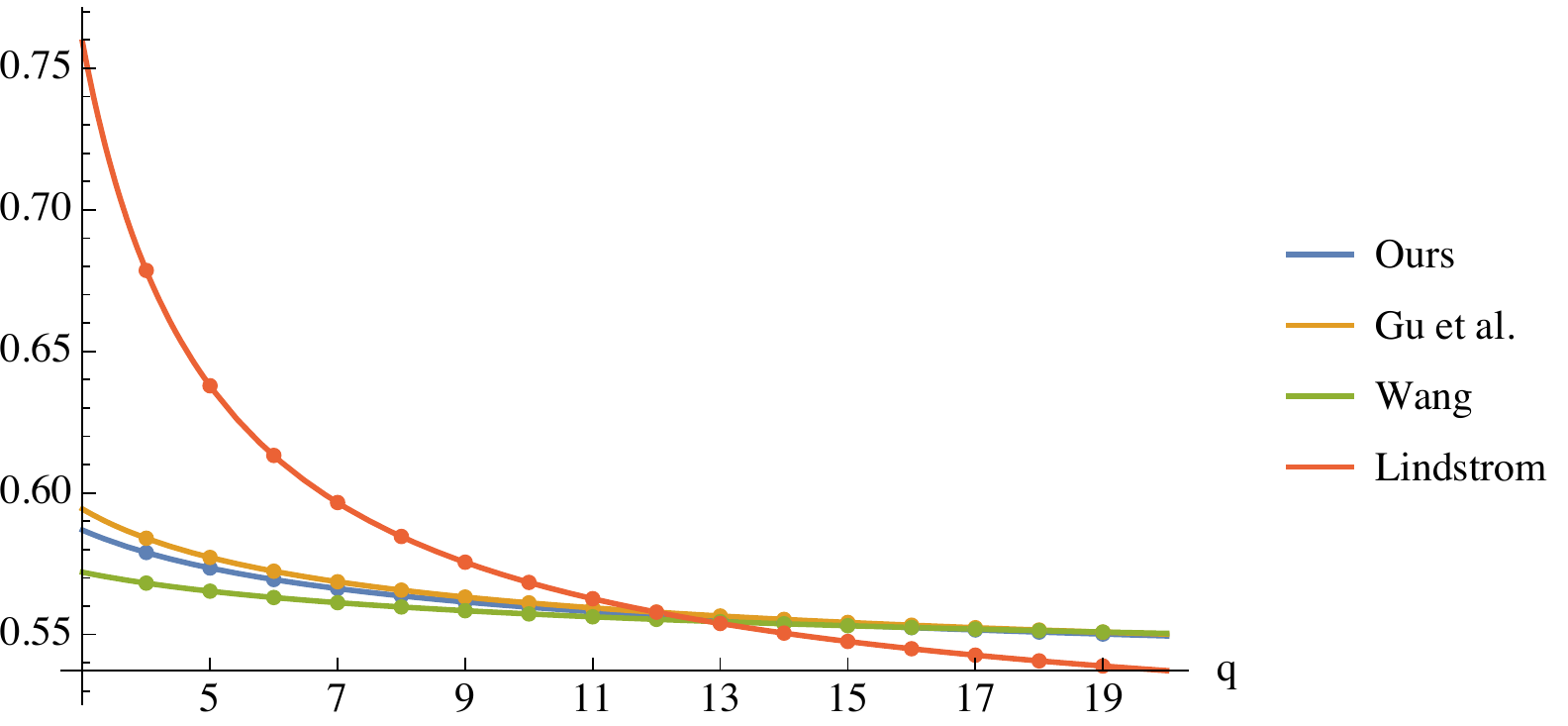}
    \caption{Upper bounds on $R_b$.}
    \label{fig:Comparison2}
  \end{minipage}
\end{figure}


%
%



\end{document}